\newcommand{\EndProof}{\hfill $\square$ \\ \indent}
\newcommand{\param}{ \sigma, r, R}
\newcommand{\rR}{r, R}
\newcommand{\md}{\mathrm{d}}
\newcommand{\g}{\zeta}
\newcommand{\dstar}{ \md \sigma \, \md v_* \md I_*}
\newcommand{\dall}{ \md \sigma \,   \md v_* \md I_* \, \md v \, \md I}
\newcommand{\dii}{   \md v_* \md I_* \, \md v \, \md I}
\newcommand{\ipar}{ \mathbb{S}^2 }
\newcommand{\ii}{(\mathbb{R}^3 \times \mathbb{R}_+)^2}
\newcommand{\iall}{(\mathbb{R}^3 \times \mathbb{R}_+)^2 \times  \mathbb{S}^2 }
\newcommand{\ivI}{\mathbb{R}^3 \times \mathbb{R}_+}
\newcommand{\la}{\langle}
\newcommand{\ra}{\rangle}
\newcommand{\As}{\tilde{A}_k}
\newcommand{\Qf}{Q^{\text{\textup{f}}}}
\newcommand{\Qff}{Q_{\zeta_{\text{\textup{f}}}}^{\text{\textup{f}}}}
\newcommand{\Qp}{Q_\zeta}
\newcommand{\gf}{\zeta_{\text{\textup{f}}}}
\newcommand{\Qom}{Q^{\omega}}
\newcommand{\Bf}{B^{\text{{\textup{f}}}}}
\newcommand{\Bft}{\tilde{B}^{\text{{\textup{f}}}}}
\newcommand{\mv}{\mathfrak{m}^v}
\newcommand{\mI}{\mathfrak{m}^I}
\newcommand{\mvI}{\mathfrak{m}}
\begin{document}

\title*{Moment estimates for polyatomic Boltzmann equation with frozen collisions}
% Use \titlerunning{Short Title} for an abbreviated version of
% your contribution title if the original one is too long
\author{Ricardo Alonso and Milana \v Coli\'c}
% Use \authorrunning{Short Title} for an abbreviated version of
% your contribution title if the original one is too long
\institute{Ricardo Alonso \at Division of Arts \& Sciences, Texas A\&M  University at Qatar, Education City, Doha, Qatar, \email{ricardo.alonso@qatar.tamu.edu}
\and Milana \v Coli\'c \at Department of Mathematics and Informatics, Faculty of Sciences, University of Novi Sad, Trg Dositeja Obradovi\'ca 4, 21 000 Novi Sad, Serbia \at and Department of Mathematics and Scientific Computing, University of Graz,  Heinrichstra\ss e 36, 8010 Graz, Austria,
\email{milana.colic@dmi.uns.ac.rs}}
%
% Use the package "url.sty" to avoid
% problems with special characters
% used in your e-mail or web address
%
\maketitle

\abstract{In this paper, a polyatomic gas with continuous internal energy is considered, allowing for frozen collisions, in which the kinetic energy of the colliding particle pair is conserved, and the internal energy of each particle remains unchanged. \emph{A priori} moment estimates are derived for solutions of the space-homogeneous Boltzmann equation with a collision kernel of the hard potentials type with cut-off. The model with frozen collisions is first analyzed, followed by a review of general collisions—referred to as pure polyatomic—which preserve the total kinetic and internal energy. By combining existing results for pure polyatomic collisions with the newly derived estimates for frozen collisions, moment estimates are established for the Boltzmann equation with a collision operator that convexly combines both types of collisions. In particular, the moment generation property is shown to be driven by the rate of the pure polyatomic operator, and the moment propagation property holds.}

\keywords{polyatomic gas; frozen collisions; Boltzmann equation; generation and propagation of moments}

\section{Introduction}

This paper is concerned with the Boltzmann equation modelling a polyatomic gas within the continuous approach  for internal energy. In such a setting, gas is described by a distribution function $f(t,x,v,I)\geq0$ that depends on time $t>0$, space position $x\in\mathbb{R}^3$,   molecular velocity $v\in\mathbb{R}^3$ and internal energy $I\geq0$. Evolution of $f$, as governed by the Boltzmann equation,   accounts for the  effects of  collisions on gas dynamics through the collision operator.  The base assumptions used to model collisions profoundly  influence   the form of the collision operator and subsequent analysis. One  such assumption is that collisions are binary and conserve  both momentum and total (kinetic + internal) energy. Namely, for  two molecules of mass $m$, with velocity-internal energy pairs $(v',I')$ and $(v'_*,I'_*)$, which collide and give rise to molecules of the same mass $m$ with $(v,I)$ and $(v_*,I_*)$, respectively, the following rules apply
 \begin{equation}\label{coll CL poly}
 	v' + v'_*  = v + v_*, \qquad \frac{m}{2} |v'|^2 + I' +  \frac{m}{2} |v'_*|^2 + I'_* = \frac{m}{2} |v|^2 + I +  \frac{m}{2} |v_*|^2 + I_*.
 \end{equation}
Within this framework, the  collision operator and the Boltzmann equation have been studied in the space homogeneous case in \cite{Gamba-Colic-poly,Alonso-Gamba-Colic,MC-Alonso-Lp, MC-Alonso-Pesaro}, and in perturbative setting in \cite{Bern,Brull,Duan-Li,Ko-Son}, all assuming a collision kernel form corresponding to hard potentials in both relative velocity and internal energy. 

The question of physical relevance of the  aforementioned collision kernel  by modelling of transport coefficients (shear and bulk viscosity and heat conductivity), based on  evaluation of the full collision operator, was raised in \cite{Djordj-Colic-Spa,Djordj-Colic-Torr,Djordj-Obl-Colic-Torr}. For recovering a correct value of the Prandtl number, it was crucial to introduce in the  description  collisions that preserve only the kinetic energy. It has been already known in literature dealing with operators of relaxation type, such as BGK, that various exchange processes occur on different characteristic time scales. The fact that translational energy is exchanged between particles  in all collisions and the internal energy is exchanged only in some of the collisions is accounted by introducing a two term operator where the first term only models the translational exchange processes and the second term models translational and internal exchange processes \cite{Frozen Str,Frozen Wu}. The concept of frozen (elastic) collisions is also used in direct simulation Monte Carlo (DSMC) method where a probability of  internal energy relaxation event is prescribed for two particles selected for a collision \cite{Frozen DSMC}.

These considerations motivate us to incorporate frozen collisions in the polyatomic Boltzmann equation and study moment estimates for a solution of such equation in the space homogeneous case. Namely, frozen collisions refer to collisions in which the internal energy of each molecule remains invariant \cite{Djordj-Colic-Torr}, i.e.
\begin{equation}\label{CL frozen}
v'+v'_* = v+v_*, \quad  |v'|^2 +    |v'_*|^2 = |v|^2 +     |v_*|^2, \quad   I' = I,  \quad  I'_* =  I_*.
\end{equation}
The Boltzmann equation we consider in this paper convexly combines, with a factor $\omega\in[0,1]$,  the collision operator $\Qp(f,f)(v,I)$ for pure polyatomic (or non-frozen) collisions \eqref{coll CL poly}  with  potential rate $\g$ and the collision operator $\Qff(f,f)(v,I)$ for frozen collisions \eqref{CL frozen} with potential rate $\gf$, 
\begin{equation}\label{omega coll op BE}
	\partial_t f(t,v,I) = \Qom(f,f)(v,I) := \omega \ \Qp(f,f)(v,I) + (1-\omega) \	\Qff(f,f)(v,I).
\end{equation}
Obviously, the case $\omega=1$ corresponds to results already established in \cite{Gamba-Colic-poly,Alonso-Gamba-Colic} recently reviewed for the case of a single component gas in \cite{MC-Alonso-Pesaro}. Here we will consider the frozen case ($\omega=0$) and combine the results to conclude on $\omega\in(0,1)$.

The paper is organized as follows. In Section \ref{Sec: Frozen}, the frozen collision operator is introduced and moment estimates for the Boltzmann equation \eqref{omega coll op BE} with $\omega=0$ are studied in Section \ref{Sec: ME frozen}. In particular, for solutions with finite mass and energy, we show the propagation property of moments of any order $k>2$. Then, Section \ref{Sec: Convex} considers the Boltzmann equation  \eqref{omega coll op BE} with $\omega \in (0,1)$ and  proves moment generation estimate governed by the hard potentials rate  $\g$ of pure polyatomic collision operator and moment propagation estimate for any $k$-moment, $k>2$, of solutions to  \eqref{omega coll op BE}  with finite mass and energy. Notation of moments is introduced below.

\subsection*{Notation}
To be consistent with \cite{Alonso-Gamba-Colic}, we define the brackets,
\begin{equation}\label{brackets}
	\la v \ra = \sqrt{1 + \tfrac{1}{2} |v|^2 }, \qquad \la I \ra = \sqrt{  1  + \tfrac{1}{m} I}, \qquad \la v, I \ra  = \sqrt{  1+\tfrac{1}{2} |v|^2 + \tfrac{1}{m} I}.
\end{equation}
Then, polynomial moment of order $k\geq0$ associated to any suitable function $f$ is defined with respect to brackets \eqref{brackets} as follows,
\begin{equation}
	\label{moments}
	\begin{split}
		&\mathfrak{m}^v_k[f] = 	\int_{\ivI} f(v,I) \la v \ra^k \,  \md v \,  \md I, \qquad 	\mathfrak{m}^I_k[f] = 	\int_{\ivI} f(v,I) \la I \ra^k \,  \md v \,  \md I,\\
		&\mathfrak{m}_k[f] = 	\int_{\ivI} f(v,I) \la v, I \ra^k \,  \md v \,  \md I.
	\end{split}
\end{equation}
Particular to this paper are partial moments $\mathfrak{m}^v_k$ and $\mathfrak{m}^I_k$, important for the frozen model, defined with respect to brackets in $v$ and $I$, to which we refer as $v-$ and $I-$moments, respectively. 

\section{Frozen collision operator}\label{Sec: Frozen}

Similar to the monatomic case, the  frozen collisions \eqref{CL frozen} are parametrized with an angular variable $\sigma\in\mathbb{S}^2$ to express 
\begin{equation}\label{coll rules frozen}
	v'  = \frac{v+v_*}{2} + \frac{|v-v_*|}{2} \sigma,  \qquad
	v'_{*} =  \frac{v+v_*}{2}  -   \frac{|v-v_*|}{2} \sigma, \quad \sigma \in \mathbb{S}^2.
\end{equation}
The corresponding frozen collision operator is defined by \cite{Djordj-Colic-Torr}
\begin{multline}\label{coll operator frozen}
\Qf(f,g)(v,I) = \int_{\mathbb{R}^3\times\mathbb{R}_+\times\mathbb{S}^2}  \left\{ f(v',I)g(v'_*,I_*)   - f(v,I)g(v_*, I_*) \right\} \\ \times  \Bf(v,v_*,I,I_*,\sigma)  \,\md \sigma \, \md v_* \, \md I_*,
\end{multline}
where the collision kernel $\Bf(v,v_*,I,I_*,\sigma)\geq 0$ satisfies micro-reversibility 
\begin{equation*}
	\Bf(v,v_*,I,I_*,\sigma)  =  \Bf(v',v'_*,I,I_*,\hat{u})  = \Bf(v_*,v,I_*,I,-\sigma). 
\end{equation*}
Since the Jacobian of the transformation $(v,v_*,I,I_*,\sigma)\mapsto(v',v'_*,I,I_*,\sigma')$, with \eqref{coll rules frozen} and $\sigma'=\tfrac{v-v_*}{|v-v_*|}$, is one, the weak form of the collision operator \eqref{coll operator frozen} can be defined, for a suitable test function $\chi(v,I)$,
\begin{multline}\label{coll operator frozen weak}
\int_{\mathbb{R}^3\times\mathbb{R}_+}  \left\{	\Qf(f,g)(v,I) + 	\Qf(g,f)(v,I) \right\} \chi(v,I) \, \md v\, \md I   
	\\	=   \int  \left\{  \chi(v', I) + \chi(v'_*, I_*) - \chi(v, I)- \chi(v_*, I_*) \right\} 
	\\ \times  f(v, I) \, g(v_*, I_*) \, \Bf(v,v_*,I,I_*,\sigma)  \, \dall.
\end{multline}
The collision invariants are the natural ones determined by the collision rules \eqref{coll rules frozen}, i.e. $1$, $v$, $ |v|^2$, $I$, but also any function $\chi(I)$ such that \eqref{coll operator frozen weak} makes sense,
\begin{equation}\label{coll invariants}
	\int_{\mathbb{R}^3\times\mathbb{R}_+}  \left\{	\Qf(f,g)(v,I) + 	\Qf(g,f)(v,I) \right\} 
 \begin{pmatrix}
		1 \\ v \\  |v|^2 \\ \chi(I)
	\end{pmatrix}   \, \md v\, \md I    =0.
\end{equation}
For the collision operator \eqref{coll operator frozen}, the space homogeneous Boltzmann equation reads 
\begin{equation}\label{BE frozen}
	\partial_t f(t,v,I) = \Qf(f,f)(v,I),
\end{equation}
and corresponds to \eqref{omega coll op BE} with $\omega=0$. For the moment, we skip  notation emphasizing $\gf$ in the superscript of the collision operator until the  final Section \ref{Sec: Convex}, since Sections \ref{Sec: Frozen}, \ref{Sec: ME frozen} deal only with the frozen collision operator.

\subsection{Assumption on the collision kernel}\label{Sec: assumpt coll kernel}
The collision kernel is assumed to take a factorized form
\begin{equation}\label{assumpt B factor}
\Bf(v,v_*,I,I_*,\sigma)   =  b(\hat{u} \cdot\sigma) \ \Bft(v,v_*,I,I_*),
\end{equation}
with $b$ non-negative and integrable  $b\in L^1(\mathbb{S}^2;\md \sigma)$, and $\Bft$ taking the form of hard potentials, up to a constant depending on $\g$,
\begin{equation}\label{assump-tilde-B}
	c_\g \,   \left( \frac{E}{m} \right)^{\g/2} \leq  \Bft(v,v_*,I,I_*) \leq    C_\g    \left( \frac{E}{m} \right)^{\g/2}, \quad \frac{E}{m}= \frac{1}{4}|u|^2+\frac{I+I_*}{m}, \quad  \g \in (0,2],
\end{equation}
In the sequel, we consider exclusively the collision operator  \eqref{coll operator frozen} describing frozen collisions with the collision kernel satisfying \eqref{assumpt B factor}--\eqref{assump-tilde-B}.

\section{Moment estimates for frozen collisions}\label{Sec: ME frozen}

Due to  invariance of the internal energy in  frozen collisions \eqref{CL frozen}, for any suitable function $\chi(I)$  depending only on the internal energy, we noticed previously in \eqref{coll invariants} that 
 the weak form \eqref{coll operator frozen weak} vanishes,
\begin{equation}\label{coll operator frozen I}
	\int_{\ivI}  \left\{	\Qf(f,g)(v,I) + 	\Qf(g,f)(v,I) \right\} \chi(I) \, \md v\, \md I  =0.
\end{equation}
For a solution of the Boltzmann equation \eqref{BE frozen}, this implies
\begin{equation*}
 \int_{\ivI} f(t,v,I) \, \chi(I) \md v \, \md I =  \int_{\ivI} f(0,v,I) \, \chi(I) \md v \, \md I.
\end{equation*}
In particular, choosing $\chi(I) = \la I \ra^k$, the following Proposition holds.

\begin{proposition}[Polynomial $I-$moment propagation estimate] If  $f$  is a solution of the Boltzmann equation \eqref{BE frozen} with finite $\mvI_2$-moment, then for $k\geq0$,
	\begin{equation}\label{poly prop I}
		\mI_{k}[f](t) = \mI_{k}[f](0), \qquad \forall t\geq 0.
	\end{equation}
\end{proposition}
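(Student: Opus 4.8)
The plan is to leverage the conservation identity for internal-energy functions that the paper has already established. The key observation is that the frozen collision rules \eqref{CL frozen} leave each particle's internal energy untouched ($I'=I$, $I'_*=I_*$), so any test function depending only on $I$ is a collision invariant. This is precisely the content of \eqref{coll invariants} and its specialization \eqref{coll operator frozen I}, which states that the weak form of the symmetrized frozen operator vanishes against any admissible $\chi(I)$. The entire proof reduces to choosing the right test function and then performing a standard weak-formulation-to-conservation-law argument.

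First I would take $\chi(I) = \la I \ra^k$ in the bracket notation \eqref{brackets}. This is a smooth, nonnegative function of $I$ alone, and since $k \geq 0$ it grows at most polynomially, so $\la I \ra^k \leq \la v, I\ra^k$ pointwise; in particular the integral $\int f \, \la I\ra^k$ is controlled by the $\mvI_k$-moment. For $k \in [0,2]$ this is bounded by the finite $\mvI_2$-moment assumed in the hypothesis (together with conservation of mass), so the quantity $\mI_k[f](t)$ is finite and the weak form \eqref{coll operator frozen weak} is legitimately applied to $\chi$. Then I would multiply the Boltzmann equation \eqref{BE frozen} by $\la I \ra^k$ and integrate over $\ivI$, writing the right-hand side using the weak form. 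Using $\Qf(f,f) = \tfrac12\{\Qf(f,f) + \Qf(f,f)\}$ so that identity \eqref{coll operator frozen I} applies with $g = f$, the collision contribution vanishes identically, yielding
\begin{equation*}
	\frac{\md}{\md t} \, \mI_k[f](t) = \int_{\ivI} \Qf(f,f)(v,I) \, \la I \ra^k \, \md v \, \md I = 0.
\end{equation*}
Integrating in time from $0$ to $t$ gives \eqref{poly prop I}.

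The main obstacle, and the only point requiring genuine care, is justifying that the weak form \eqref{coll operator frozen weak} may be applied to the test function $\chi(I) = \la I \ra^k$ for arbitrary $k \geq 0$, not merely the invariants $1, v, |v|^2, I$ built into \eqref{coll invariants}. The weak form is \emph{a priori} guaranteed only for test functions making the integrals converge; for large $k$ the moment $\mI_k[f]$ need not be finite under the stated hypothesis of merely finite $\mvI_2$-moment. The cleanest resolution is to observe that the integrand of the weak form \eqref{coll operator frozen weak} vanishes \emph{pointwise} whenever $\chi$ depends only on $I$, because $\chi(v',I) + \chi(v'_*, I_*) - \chi(v,I) - \chi(v_*,I_*) = \chi(I) + \chi(I_*) - \chi(I) - \chi(I_*) = 0$ identically by the frozen rules. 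Thus no integrability beyond that needed to define $\mI_k[f](t)$ itself is required: the cancellation is exact and independent of any smallness or decay, so the argument is valid for every $k \geq 0$ for which the moment is defined, and \eqref{poly prop I} follows with the claimed equality rather than mere boundedness.
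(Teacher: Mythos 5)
Your proof is correct and follows essentially the same route as the paper: the frozen rules fix $I'=I$ and $I'_*=I_*$, so any $\chi(I)$ is a collision invariant, the weak form vanishes against $\la I\ra^k$, and integrating the equation in time gives the exact conservation \eqref{poly prop I}. Your additional remark that the cancellation $\chi(I)+\chi(I_*)-\chi(I)-\chi(I_*)=0$ is pointwise and exact, hence requires no integrability beyond the definition of $\mI_k[f]$ itself, is a sensible justification that the paper leaves implicit.
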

In particular, if $ \mI_{k}[f](0)<\infty$ then $\mI_{k}[f](t)<\infty$ for any $t\geq0$, recovering the   classical propagation property of $I-$moments. Whereas, if $ \mI_{k}[f]$ is infinite at $t=0$, it will remain infinite, i.e. there will be no later creation of such moments.

Next, we  consider moments in the velocity variable. Since the collision rules \eqref{coll rules frozen} for the velocity variable are the same as for the monatomic case, the classical Povzner lemma applies \cite{Bob-Gamba-Panferov,Alonso-Lods}. 
\begin{lemma}[Povzner $\sigma$-averaging]\label{Lemma Povzner} Let $b(\hat{u}\cdot\sigma) \in L^1(\mathbb{S}^2;\md \sigma)$.
There exists an explicit positive constant $\mathcal{C}_k< \| b\|_{L^1}$ decreasing to zero in $k>2$, such that for $v'$ and $v'_*$ from \eqref{coll rules frozen} the following averaging  holds,
\begin{equation*}
\int_{\ipar} \left( \la v' \ra^k + \la v'_* \ra^k \right) b(\hat{u}\cdot\sigma) \, \md \sigma \leq \mathcal{C}_k  \left( \la v \ra^2 + \la v_* \ra^2\right)^{k/2}, \quad k>2.
\end{equation*}
\end{lemma}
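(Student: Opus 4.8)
The plan is to reduce the statement to the classical monatomic Povzner lemma, exploiting that the velocity collision rules \eqref{coll rules frozen} coincide exactly with those of an elastic monatomic collision, with the internal energies $I,I_*$ acting as mere spectators. The only nominal difference is that the weight is the bracket $\la v \ra = \sqrt{1 + \tfrac12 |v|^2}$ rather than $|v|$, which is an inessential modification handled exactly as in \cite{Bob-Gamba-Panferov,Alonso-Lods}. First I would record the energy identity in bracket form: squaring \eqref{coll rules frozen} and using $|v'|^2 + |v'_*|^2 = |v|^2 + |v_*|^2$ gives
\[
\la v' \ra^2 + \la v'_* \ra^2 = \la v \ra^2 + \la v_* \ra^2 =: \mathcal{E},
\]
since both the constant offset and the kinetic term summed over the pair are collision invariants.

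A direct computation then yields the convex parametrization
\[
\la v' \ra^2 = \tfrac{\mathcal{E}}{2}(1 + \lambda), \qquad \la v'_* \ra^2 = \tfrac{\mathcal{E}}{2}(1 - \lambda), \qquad \lambda = \lambda(\sigma) = \frac{|v-v_*|\,(v+v_*)\cdot\sigma}{2\mathcal{E}},
\]
and an AM--GM estimate $|v-v_*|\,|v+v_*| \le |v|^2 + |v_*|^2$ bounds $|\lambda| \le \frac{|v|^2+|v_*|^2}{\,4 + |v|^2 + |v_*|^2\,} < 1$, so $\lambda$ ranges in $[-1,1]$. With this, the left-hand side factorizes as
\[
\la v' \ra^k + \la v'_* \ra^k = \Big(\tfrac{\mathcal{E}}{2}\Big)^{k/2}\big[(1+\lambda)^{k/2} + (1-\lambda)^{k/2}\big],
\]
so that, since $\mathcal{E} = \la v \ra^2 + \la v_* \ra^2$, the claim reduces to bounding $2^{-k/2}\int_{\ipar}\big[(1+\lambda)^{k/2} + (1-\lambda)^{k/2}\big]\,b(\hat u\cdot\sigma)\,\md\sigma$ by a constant $\mathcal{C}_k$ independent of $(v,v_*)$.

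The naive pointwise bound $(1\pm\lambda)^{k/2} \le 2^{k/2}$ only recovers $\mathcal{C}_k \le \|b\|_{L^1}$; to obtain both the strict inequality and the decay $\mathcal{C}_k \searrow 0$ as $k$ grows, one must exploit the $\sigma$-averaging. The mechanism is that $(1+\lambda)/2 = \la v' \ra^2/\mathcal{E}$ is controlled by the factor $\tfrac{1+\hat u\cdot\sigma}{2}$ up to transverse corrections, and the explicit angular integral
\[
2\pi\int_{-1}^1 \Big(\tfrac{1+s}{2}\Big)^{k/2}\, b(s)\,\md s
\]
tends to zero by dominated convergence, since $b \in L^1(\mathbb{S}^2;\md\sigma)$ and the integrand vanishes pointwise for $s<1$. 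This is the source of the decaying constant.

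The main obstacle is establishing this bound \emph{uniformly} in $(v,v_*)$. The difficulty is that $\lambda$ depends on $(v+v_*)\cdot\sigma$ whereas $b$ depends on $(v-v_*)\cdot\sigma$, so the two preferred directions need not be aligned, and the component of $v+v_*$ transverse to $\hat u$ produces a wobble that must be dominated; one cannot simply bound at the maximal $\lambda$, since $\lambda_{\max} = 1 - 2/\mathcal{E}$ approaches $1$ for large energies. This is precisely the content of the sharp angular-averaging estimate of \cite{Bob-Gamba-Panferov}, reviewed in \cite{Alonso-Lods}, which shows that the extremal geometry is the grazing/aligned one and that the resulting constant is controlled by the explicit decaying angular integral above, the bracket offsets being harmless for large velocities. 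Invoking that estimate furnishes the explicit $\mathcal{C}_k < \|b\|_{L^1}$ decreasing to zero in $k>2$ and completes the proof.
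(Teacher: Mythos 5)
Your reduction is correct and matches the paper's treatment: the paper gives no proof of this lemma, simply invoking the classical monatomic Povzner averaging of \cite{Bob-Gamba-Panferov,Alonso-Lods}, which is exactly what you do after verifying (correctly) that the bracket energy $\la v'\ra^2+\la v'_*\ra^2=\la v\ra^2+\la v_*\ra^2$ is conserved and that $\la v'\ra^2=\tfrac{\mathcal{E}}{2}(1+\lambda)$ with $|\lambda|<1$. The hard uniformity step (misalignment of $\widehat{v+v_*}$ with $\hat u$ and $\lambda$ approaching $\pm1$ at large energies) is indeed the content of the cited references, so deferring to them is legitimate here.
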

This averaging property allows to  conclude on moments for the collision operator.
\begin{proposition}[Polynomial moment estimates  on the frozen collision operator]\label{Prop weak} Let $\g \in (0,2]$ and $k>2$.  For 	 suitable $f,g$ having finite $\mvI_2$ moments, the following estimate on $v-$moments of the frozen collision operator \eqref{coll operator frozen} holds
		\begin{multline}\label{v moment final}
	 		\int_{\ivI}  \left\{	\Qf(f,g)(v,I) + 	\Qf(g,f)(v,I) \right\} \la v \ra^k \, \md v\, \md I   
		\\	\leq  	 - \left( A_{k}[f,g]  (\mv_{k}[f])^{1+\frac{\g}{k-2}}  +  A_{k}[g,f]  (\mv_{k}[g])^{1+\frac{\g}{k-2}}  \right)+ B_k[f,g] + B_k[g,f], 
	\end{multline}
with positive constants $A_{k}>0$ and $B_k$ depending on $\mvI_2$ moments of $f$ and $g$, which are explicitly computed along the proof. 

Moreover, for $\g \in [0,2]$ and $k>2$, the following estimate on moments of the frozen collision operator \eqref{coll operator frozen} holds
		\begin{multline}\label{vI moment}
	\int_{\ivI}  \left\{	\Qf(f,g)(v,I) + 	\Qf(g,f)(v,I) \right\} \la v, I \ra^k \, \md v\, \md I   
	\\	\leq  	    D_k  \left( \mvI_2[f] \mvI_{k}[g] + \mvI_2[g] \mvI_{k}[f] \right),
\end{multline}
where $ D_k  =  2^{ {k}/2+2} \, C_\g  \| b\|_{L^1}$.
\end{proposition}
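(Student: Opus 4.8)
The plan is to derive both bounds from the weak form \eqref{coll operator frozen weak} evaluated on a power weight, exploiting two features of frozen collisions: the internal energies are inert ($I'=I$, $I'_*=I_*$), and the kernel factorizes as in \eqref{assumpt B factor}, so that the angular variable enters only through $b(\hat u\cdot\sigma)$. Hence, integrating in $\sigma$ first, the $\sigma$-averaged weight difference multiplies the $\sigma$-independent factor $\Bft$, to which the two-sided bound \eqref{assump-tilde-B} is applied afterwards. The two estimates then diverge: \eqref{v moment final} needs the \emph{coercive} (negative) leading term and uses Lemma \ref{Lemma Povzner}, whereas \eqref{vI moment} only needs a crude upper bound together with a sharp elementary convexity inequality.

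For \eqref{v moment final} I would take $\chi(v,I)=\la v \ra^k$, which is independent of $I$, so the weight difference reduces to the monatomic Povzner quantity $\la v'\ra^k+\la v'_*\ra^k-\la v\ra^k-\la v_*\ra^k$. Applying Lemma \ref{Lemma Povzner} to the gain and computing the loss explicitly, the $\sigma$-average is bounded by
\begin{equation*}
-(\|b\|_{L^1}-\mathcal{C}_k)\left(\la v\ra^k+\la v_*\ra^k\right)+C_k\left(\la v\ra^{k-2}\la v_*\ra^2+\la v\ra^2\la v_*\ra^{k-2}\right),
\end{equation*}
where the diagonal coefficient is strictly negative since $\mathcal{C}_k<\|b\|_{L^1}$, and the remainder gathers the mixed binomial terms of $(\la v\ra^2+\la v_*\ra^2)^{k/2}$. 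Multiplying by $f(v,I)\,g(v_*,I_*)\,\Bft$ and integrating, I would estimate the negative diagonal from below by the lower bound in \eqref{assump-tilde-B}, $\Bft\ge c_\g\,(|v-v_*|^2/4)^{\g/2}$, together with the coercivity of the collision frequency $\int g(v_*,I_*)\,|v-v_*|^\g\,\md v_*\,\md I_*\ge \kappa_g\la v\ra^\g-\lambda_g$, with $\kappa_g,\lambda_g$ controlled by $\mvI_2[g]$. This produces a term of size $-\mv_{k+\g}[f]$ up to a harmless $\mv_k[f]$ correction, and the decisive step is the interpolation $\mv_{k+\g}[f]\ge(\mv_2[f])^{-\g/(k-2)}(\mv_k[f])^{1+\g/(k-2)}$, which fixes the exponent $1+\g/(k-2)$ and the constant $A_k$; the mixed remainder, controlled by the upper bound in \eqref{assump-tilde-B} and $\g\le2$, integrates to finite moments of order at most $k$ and is absorbed into $B_k$.

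For \eqref{vI moment} I would take $\chi(v,I)=\la v,I\ra^k$. Writing $A=\la v,I\ra^2$, $B=\la v_*,I_*\ra^2$, $A'=\la v',I\ra^2$, $B'=\la v'_*,I_*\ra^2$ and $p=k/2\ge1$, the conservation of kinetic energy in \eqref{coll rules frozen} with frozen internal energies gives the exact identity $A'+B'=A+B=:S$. Hence the gain satisfies $A'^{p}+B'^{p}\le S^{p}$, and I would invoke the sharp elementary inequality $S^{p}-A^{p}-B^{p}\le C_p\,S^{p-2}AB$ to bound the whole weight difference by $C_p\,S^{p-2}AB$, which crucially keeps the product $AB=\la v,I\ra^2\la v_*,I_*\ra^2$ factored out. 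After the $\sigma$-average (factor $\|b\|_{L^1}$) and the upper bound $\Bft\le C_\g\,(E/m)^{\g/2}\le C_\g\,S^{\g/2}$ from \eqref{assump-tilde-B}, the integrand is dominated by $C_p C_\g\|b\|_{L^1}\,S^{(k-4+\g)/2}\,AB$. Since $\g\le2$ and $S\ge2$, one has $S^{(k-4+\g)/2}\le S^{(k-2)/2}\le 2^{k/2}\big(\la v,I\ra^{k-2}+\la v_*,I_*\ra^{k-2}\big)$, so the whole integrand is pointwise bounded by $D_k\big(\la v,I\ra^{k}\la v_*,I_*\ra^{2}+\la v,I\ra^{2}\la v_*,I_*\ra^{k}\big)$; integrating against $f(v,I)\,g(v_*,I_*)$ yields \eqref{vI moment} with $D_k=2^{k/2+2}C_\g\|b\|_{L^1}$.

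The main obstacle lies in \eqref{v moment final}: it is the quantitative coercivity lower bound for $\int g\,|v-v_*|^\g$, which is what actually generates the negative high-order moment and must be made explicit in terms of $\mvI_2[g]$ before interpolation can pin down the exponent $1+\g/(k-2)$; the binomial bookkeeping there is otherwise routine. By contrast, \eqref{vI moment} hinges on a single clean point—the sharp convexity bound $S^{p}-A^{p}-B^{p}\le C_p S^{p-2}AB$ keeping the factor $AB$ explicit rather than the weaker mixed-sum form—after which $\g\le2$ and the monotonicity of the brackets make the remaining steps immediate, avoiding any moment of order strictly larger than $k$.
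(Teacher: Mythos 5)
Your part (b) is essentially the paper's argument: the frozen conservation law gives $\la v',I\ra^2+\la v'_*,I_*\ra^2=\la v,I\ra^2+\la v_*,I_*\ra^2=:S$, a convexity/binomial bound keeps the product structure, the upper bound on $\Bft$ and monotonicity of moments finish the proof; your inequality $S^{p}-A^{p}-B^{p}\le C_p S^{p-2}AB$ is a valid substitute for the $p$-binomial lemma with indicators used in the paper (the numerical constant will not come out exactly as the stated $D_k=2^{k/2+2}C_\g\|b\|_{L^1}$, but that is cosmetic). Part (a) also follows the paper's skeleton (Povzner averaging, binomial splitting, coercivity from the kernel lower bound, interpolation to produce the exponent $1+\tfrac{\g}{k-2}$), but it has a genuine gap in the treatment of the positive mixed terms.

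The gap: for the gain part you must use the \emph{upper} bound $\Bft\le C_\g (E/m)^{\g/2}$, and $E/m$ contains $(I+I_*)/m$. Hence the mixed binomial term $\la v\ra^{k-2}\la v_*\ra^2\mathds{1}_{\la v_*\ra\le\la v\ra}$ produces, after splitting $(E/m)^{\g/2}$, a contribution proportional to $\big(\int f(v,I)\,\la v\ra^{k-2} I^{\g/2}\,\md v\,\md I\big)\,\mv_2[g]$. This is not ``a finite moment of order at most $k$'' that can be put into $B_k$ (which must depend only on $\mvI_2$), and it also cannot be absorbed by your coercive term $-\mv_{k+\g}[f]$: a pointwise interpolation $\la v\ra^{k-2}\la I\ra^{\g}\le \la v,I\ra^{2(1-\theta)}\la v\ra^{(k+\g)\theta}$ would require $\tfrac{k-2}{k+\g}\le\theta\le 1-\tfrac{\g}{2}$, which is impossible once $\g k>4+2\g-\g^2$ (for $\g=2$, for every $k>2$). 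By lower-bounding $\Bft$ only by $c_\g(|u|^2/4)^{\g/2}$ you have discarded exactly the piece needed here: the paper keeps $\Bft\ge\tilde c_\g\big((|u|^2/4)^{\g/2}+(I/m)^{\g/2}+(I_*/m)^{\g/2}\big)$, so the loss term also carries $-\la v\ra^{k}I^{\g/2}$, and a Young step $\la v\ra^{k-2}\le\varepsilon\la v\ra^{k}+\varepsilon^{-(k-2)/2}$ with $\varepsilon$ tuned to $\mvI_0,\mv_2$ absorbs the dangerous piece into that negative internal-energy-weighted term, leaving only $\mI_\g$- and $\mv_2$-type factors. You need to restore this ingredient. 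A smaller looseness in the same spirit: the purely kinetic remainders, e.g.\ $\mv_2[f]\,\mv_{k-2+\g}[g]$ and the $\mv_k[f]$ correction from your coercivity bound, also cannot go ``into $B_k$'' as they stand; they must be interpolated against $\mv_{k+\g}$ (or against $(\mv_k)^{1+\g/(k-2)}$) and split by Young so that only $\mvI_2$-dependent constants survive — routine, but it should be stated.
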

\begin{proof}
The weak form \eqref{coll operator frozen weak} for the test function $\chi = \la v \ra^k$ implies	
	\begin{multline*}
W :=		\int_{\ivI}  \left\{	\Qf(f,g)(v,I) + 	\Qf(g,f)(v,I) \right\} \la v \ra^k \, \md v\, \md I   
		\\	\leq   \int_{\ii}  \left\{ \mathcal{C}_k  \left( \la v \ra^2 + \la v_* \ra^2\right)^{k/2}- \|b\|_{L^1} \left(\la v\ra^k + \la v_* \ra^k \right) \right\} 
		\\ \times  f(v, I) \, g(v_*, I_*) \, \Bft(v,v_*,I,I_*)  \, \dall.
	\end{multline*}
Now, applying  p-Binomial inequality, see Lemma 5.5 from \cite{Alonso-Gamba-Colic},
\begin{multline}\label{pomocna 3}
	\left(  \la v \ra^{2} + \la v_* \ra^{2}  \right)^{k/2} \leq  \la v  \ra^{k}  +  \la v_* \ra^{k} 
	\\
	+  2^{ {k}/2+1} \left( \left\langle v \right\rangle^2 \left\langle v_* \right\rangle^{k- 2} \mathds{1}_{ \left\langle v \right\rangle \leq \left\langle v_* \right\rangle} + \left\langle v\right\rangle^{k- 2} \left\langle v_* \right\rangle^2  \mathds{1}_{   \left\langle v_* \right\rangle \leq \left\langle v  \right\rangle} \right),
\end{multline}
allows to split $W$ into positive  and negative   contributions,  
	\begin{equation}\label{W pomocna}
		\begin{split}
	W  	&\leq   \int_{\ii} f(v, I) \, g(v_*, I_*) \, \Bft(v,v_*,I,I_*) \Big\{ - \left( \|b\|_{L^1} -  \mathcal{C}_k   \right) \left(\la v\ra^k + \la v_* \ra^k \right) \\
	&	+   2^{ {k}/2+1} \,  \mathcal{C}_k  \left( \left\langle v \right\rangle^2 \left\langle v_* \right\rangle^{k- 2} \mathds{1}_{ \left\langle v \right\rangle \leq \left\langle v_* \right\rangle} + \left\langle v\right\rangle^{k- 2} \left\langle v_* \right\rangle^2  \mathds{1}_{   \left\langle v_* \right\rangle \leq \left\langle v  \right\rangle} \right)
	\Big\} 
  \, \dii. 
  \end{split}
\end{equation}
  First denote 
\begin{equation*}
 \tilde{A}_k =  \|b\|_{L^1} -  \mathcal{C}_k    >0, \quad k>2.
\end{equation*}
Then, the lower bound on the collision kernel  \eqref{assump-tilde-B} implies the estimate
\begin{equation}\label{Bf lower}
	\begin{split}
  \Bft(v,v_*,I,I_*) &\geq   	c_\g \,   \left( \frac{E}{m} \right)^{\g/2}  \geq   \tilde{c}_\g\,   \left(   \left(\frac{|u|^2}{4}\right)^{\g/2} + \left(\frac{I}{m}\right)^{\g/2}  + \left(\frac{I_*}{m}\right)^{\g/2}  \right) \\
   &\geq \tilde{c}_\g\,   \left( L \la v \ra^\g - \la v_* \ra^\g + \left(\frac{I}{m}\right)^{\g/2}  + \left(\frac{I_*}{m}\right)^{\g/2}  \right), 
   \end{split}
\end{equation}
with $  \tilde{c}_\g = c_\g\, 3^{\g/2-1},   L  = 2^{- \g} \min\left\{1, 2^{1-\g} \right\}$, see e.g. Lemma A.1 in  \cite{Alonso-Gamba-Colic}
%Using this estimate  in a symmetric way for $v$ and $v_*$, and keeping only some parts that multiply the internal energy  leads to a pointwise estimate
, leading to 
\begin{multline*}
	\Bft(v,v_*,I,I_*) \left(\la v\ra^k + \la v_* \ra^k \right) 
	\\  \geq  \tilde{c}_\g  L \left(\la v\ra^{k+\g} +\la v_*\ra^{k+\g}  \right) - \tilde{c}_\g \left(   \la v \ra^k \la v_* \ra^\g +  \la v \ra^\g \la v_* \ra^k \right)  \\ + \tilde{c}_\g   \left( \la v\ra^k \left(\frac{I}{m}\right)^{\g/2}  + \la v_* \ra^k \left(\frac{I_*}{m}\right)^{\g/2}  \right).
\end{multline*}
On the other hand, the positive part of $W$ uses an upper bound, for $\g/2 \leq 1$, 
\begin{equation*}
	\left( \frac{E}{m} \right)^{\g/2} \leq \la v \ra^{\g} +  \la v_* \ra^{\g}  + \left(\frac{I}{m}\right)^{\g/2} + \left(\frac{I_*}{m}\right)^{\g/2}, 
\end{equation*}
which in combination with \eqref{assump-tilde-B} implies 
\begin{equation}\label{Bf upper}
	\begin{split}
	\Bft(v,v_*,&I,I_*)  \left( \left\langle v \right\rangle^2 \left\langle v_* \right\rangle^{k- 2} \mathds{1}_{ \left\langle v \right\rangle \leq \left\langle v_* \right\rangle} + \left\langle v\right\rangle^{k- 2} \left\langle v_* \right\rangle^2  \mathds{1}_{   \left\langle v_* \right\rangle \leq \left\langle v  \right\rangle} \right) 
	\\
	& \leq 2 \ C_\g  \left( \left\langle v \right\rangle^2 \left\langle v_* \right\rangle^{k- 2+\g}   + \left\langle v\right\rangle^{k- 2+\g} \left\langle v_* \right\rangle^2   \right) 
	\\
& \quad	+ C_\g\left(\tfrac{1}{m} \, I\right)^{\g/2} \left( \la v_*\ra^k +  \left( \varepsilon \left\langle v\right\rangle^k +  {\varepsilon^{-\frac{k-2}{2}}}  \right) \left\langle v_* \right\rangle^2      \right)
	\\
&	\quad	+ C_\g\left(\tfrac{1}{m} \, I_*\right)^{\g/2} \left( \la v\ra^k +  \left( \varepsilon \left\langle v_*\right\rangle^k +  {\varepsilon^{-\frac{k-2}{2}}}  \right) \left\langle v\right\rangle^2      \right),
\end{split}
\end{equation}
where we have conveniently  used the domain for velocity, and Young's inequality to get $  \left\langle v\right\rangle^{k- 2} \leq   \varepsilon \left\langle v\right\rangle^k +  {\varepsilon^{-\frac{k-2}{2}}}$. Gathering \eqref{Bf lower} and \eqref{Bf upper},  \eqref{W pomocna} becomes
	\begin{equation*}
	\begin{split}
	&	W  	\leq   \int_{\ii} f(v, I) \, g(v_*, I_*) \,  \\
		\times 
		\Big\{ & - \As \tilde{c}_\g  \left(   L \left(\la v\ra^{k+\g} +\la v_*\ra^{k+\g}  \right)    + m^{-\g/2} \left( \la v\ra^k \, {I}^{\g/2}  + \la v_* \ra^k \, {I_*}^{\g/2}  \right) \right) \\
	& + \As    \tilde{c}_\g \left(   \la v \ra^k \la v_* \ra^\g +  \la v \ra^\g \la v_* \ra^k \right)\\
		&	+   2^{ {k}/2+1} \,  \mathcal{C}_k \, C_\g  \Big\{ 2 \, \left( \left\langle v \right\rangle^2 \left\langle v_* \right\rangle^{k- 2+\g}   + \left\langle v\right\rangle^{k- 2+\g} \left\langle v_* \right\rangle^2   \right) 
		\\
		& \quad	+ m^{-\g/2} \left( \la v_*\ra^k \, I^{\g/2} +   \varepsilon \left\langle v\right\rangle^k  I^{\g/2}  \left\langle v_* \right\rangle^2   +  \varepsilon^{-\frac{k-2}{2}} I^{\g/2}  \left\langle v_* \right\rangle^2   \right)     
		\\
		&	\quad	+  m^{-\g/2} \left( \la v\ra^k \, I_*^{\g/2} +  \varepsilon \left\langle v_*\right\rangle^k  I_*^{\g/2} \left\langle v\right\rangle^2     +  \varepsilon^{-\frac{k-2}{2}} I_*^{\g/2}  \left\langle v\right\rangle^2      \right) \Big\}
		\Big\} 
		\, \dii.
	\end{split}
\end{equation*}
Choosing  an $\epsilon$ depending on $\mvI_0$ and $\mv_2$ moments,
\begin{equation*}
	\varepsilon[\cdot] =  \frac{\As \tilde{c}_\g      \mvI_0[\cdot] }{2^{ {k}/2+2} \,  \mathcal{C}_k \, C_\g    \mv_2[\cdot]},
\end{equation*}
the last inequality reads, in terms of moments defined in \eqref{moments},
	\begin{equation*}
	\begin{split}
			W  	\leq    & - \As \tilde{c}_\g     L \left(  \mvI_0[g] \, \mv_{k+\g}[f]    + \mvI_0[f] \,  \mv_{k+\g}[g]  \right)  \\
		&  - \frac{ \As \tilde{c}_\g  }{2}  m^{-\g/2} \bigg(  \mvI_0[g] \int_{\ivI} f(v, I) \la v\ra^k \, {I}^{\g/2} \md v\, \md I \\ & \qquad \qquad \qquad   +  \mvI_0[f] \int_{\ivI} g(v_*, I_*) \la v_* \ra^k \, {I_*}^{\g/2} \md v_*\, \md I_* \bigg)   \\
		& + \As    \tilde{c}_\g \left(  \mv_\g[g]  \, \mv_k[f]  +  \mv_\g[f] \, \mv_k[g]  \right)\\
		&	+   2^{ {k}/2+1} \,  \mathcal{C}_k \, C_\g  \Big\{ 2 \, \left( \mv_2[f] \, \mv_{k- 2+\g}[g]   +\mv_2[g] \, \mv_{k- 2+\g}[f]    \right) 
		\\
		& \quad	+  \mI_{\g}[f]   \,  \mv_k[g]  +  \varepsilon[g]^{-\frac{k-2}{2}} \, \mI_{\g}[f]  \mv_2[g]    
	 	+ \mI_{\g}[g]  \,  \mv_k[f]      +  \varepsilon[f]^{-\frac{k-2}{2}} \mI_{\g}[g]  \, \mv_2[f]       \Big\}.
	\end{split}
\end{equation*}
The terms can be regrouped by introducing the notation  
	\begin{equation*}
		\tilde{K}_1[\cdot] = \As    \tilde{c}_\g    \, \mv_\g[\cdot] + 2^{ {k}/2+1} \,  \mathcal{C}_k \, C_\g    \mI_{\g}[\cdot], \quad 	\tilde{K}_2[\cdot] = 2^{ {k}/2+2} \,  \mathcal{C}_k \, C_\g  \mv_2[\cdot],
	\end{equation*}
so that $W$ becomes, after neglecting the mixed moments,
	\begin{equation*}
		\begin{split}
			W  	\leq   
		 & - \As \tilde{c}_\g     L \left(  \mvI_0[g] \, \mv_{k+\g}[f]    + \mvI_0[f] \,  \mv_{k+\g}[g]  \right)  \\
			& +  \tilde{K}_1[g] \, \mv_k[f]   + \tilde{K}_1[f]\,   \mv_k[g]     	   + \tilde{K}_2[g] \, \mv_{k- 2+\g}[f]    +  \tilde{K}_2[f] \, \mv_{k- 2+\g}[g]
			\\
			&  	+ 2^{ {k}/2+1} \,  \mathcal{C}_k \, C_\g  \left(    \varepsilon[g]^{-\frac{k-2}{2}} \mI_{\g}[f]  \mv_2[g]    
			+       \varepsilon[f]^{-\frac{k-2}{2}} \mI_{\g}[g]  \mv_2[f]   \right),
		\end{split}
	\end{equation*}
Then, we follow the standard strategy of invoking moment interpolation formulas  together with Young's inequality  (see e.g. Eqs (110) and (113) in  \cite{Alonso-Gamba-Colic}),
\begin{align}
	&\mv_k[\cdot]  \leq (\mv_2[\cdot])^{\frac{\g}{k-2+\g}} (\mv_{k+\g}[\cdot])^{\frac{k-2}{k-2+\g}} \label{mi}
\\& \qquad\qquad \Rightarrow    \tilde{K}_1[g] \, \mv_k[f]  \leq K_1[f,g]  + \delta \, \mvI_0[g] \, \mv_{k+\g}[f], \nonumber \\
	&\mv_{k-2+\g}[\cdot]  \leq (\mvI_0[\cdot])^\frac{2}{k+\g} (\mv_{k+\g}[\cdot])^\frac{k-2+\g}{k+\g} \nonumber
\\& \qquad\qquad \Rightarrow  \tilde{K}_2[g] \, \mv_{k- 2+\g}[f]  \leq K_2[f,g] + \delta \,\mvI_0[g] \, \mv_{k+\g}[f], \nonumber
\end{align}
with constants
\begin{equation*}
	K_1[f,g] =\frac{(\tilde{K}_1[g])^{\frac{k-2+\g}{\g}} \mv_2[f]}{(\mvI_0[g] \delta)^{\frac{k-2}{\g}}}, \quad   K_2[f,g] =  \frac{(\tilde{K}_2[g])^{\frac{k+\g}{2}} \mvI_0[f]}{(\mvI_0[g] \delta)^{\frac{k-2+\g}{2}}}.
\end{equation*}
Then,  the choice  $\delta=\frac{\As \tilde{c}_\g     L}{4}$ allows an absorption into the negative part, and yields 
	\begin{equation*}
W	\leq   
		 -  \frac{\As \tilde{c}_\g     L}{2} \left(  \mvI_0[g] \, \mv_{k+\g}[f]    + \mvI_0[f] \,  \mv_{k+\g}[g]  \right) + B_k[f,g] + B_k[g,f],
\end{equation*}
with the constant
\begin{equation*}
	B_k[f,g] =  K_1[f,g]  
	%+  K_1[g,f]  
	+ K_2[f,g]  
	%+  K_2[g,f]   \\ 
	+ 2^{ {k}/2+1} \,  \mathcal{C}_k \, C_\g  
	%\left(    
	\varepsilon[g]^{-\frac{k-2}{2}} \mI_{\g}[f]  \mv_2[g].
%	+       \varepsilon_f^{-\frac{k-2}{2}} \mI_{\g}[g]  \mv_2[f]   \right).
\end{equation*}
Then, invoking moment interpolation \eqref{mi}, 
	\begin{equation*}
	\begin{split}
		W  	\leq   
		& - \left( A_{k }[f,g]  (\mv_{k}[f])^{1+\frac{\g}{k-2}}  +  A_{k }[g,f]  (\mv_{k}[g])^{1+\frac{\g}{k-2}}  \right)+ B_k[f,g] + B_k[g,f],
	\end{split}
\end{equation*}
the final estimate follows, by introducing the notation
\begin{equation*}
A_{k}[f,g] =  \frac{\As \tilde{c}_\g     L}{2} \, \mvI_0[g] \,(\mv_2[f])^{-\frac{\g}{k-2}}.
\end{equation*}
For the part (b), we first rewrite the weak form of the collision operator
\begin{multline*}
	W :=		\int_{\ivI}  \left\{	\Qf(f,g)(v,I) + 	\Qf(g,f)(v,I) \right\} \la v, I \ra^k \, \md v\, \md I   
	\\	=   \int_{\iall}  \left\{ \la v',I \ra^k + \la v'_*,I_* \ra^k  -  \la v,I\ra^k + \la v_*,I_* \ra^k  \right\} 
	\\ \times  f(v, I) \, g(v_*, I_*) \, \Bf(v,v_*,I,I_*,\sigma)  \, \dall.
\end{multline*}
By the conservation law of the energy \eqref{CL frozen}, since $k>2$,
\begin{equation*}
	\la v', I \ra^{k} +  \la v'_*, I_* \ra_j^{k} \leq   \left(\la v', I \ra^{2} +  \la v'_*, I_* \ra^{2}  \right)^{k/2}  = \left(\la v, I \ra^{2} +  \la v_*, I_* \ra^{2}   \right)^{k/2}. 
\end{equation*}
Then, applying
\begin{multline*}
	\left(\la v, I \ra^{2} +  \la v_*, I_* \ra^{2}   \right)^{k/2} -  \la v, I  \ra^{k}  -  \la v_*, I_* \ra^{k}  \\ \leq 
	 2^{ {k}/2+1} \left( \left\langle v, I \right\rangle^2 \left\langle v_*, I_* \right\rangle^{k- 2} \mathds{1}_{ \left\langle v, I \right\rangle \leq \left\langle v_*, I_* \right\rangle} + \left\langle v, I \right\rangle^{k- 2} \left\langle v_*, I_* \right\rangle^2  \mathds{1}_{   \left\langle v_*, I_* \right\rangle \leq \left\langle v, I  \right\rangle} \right),
\end{multline*}
together with an upper bound on the collision kernel
\begin{equation*}
\Bf(v,v_*,I,I_*,\sigma)  \leq b(\hat{u} \cdot\sigma) \   C_\g   \left( \la v, I \ra^{\g}   + \la v_*, I_* \ra^{\g} \right),
\end{equation*}
implies, after a convenient use of the indicator function,
\begin{equation*}
	\begin{split}
	W & \leq  2^{ {k}/2+2} \, C_\g  \| b\|_{L^1}  \left( \mvI_2[f] \mvI_{k-2+\g}[g] + \mvI_2[g] \mvI_{k-2+\g}[f] \right) \\ & \leq  2^{ {k}/2+2} \, C_\g  \| b\|_{L^1}  \left( \mvI_2[f] \mvI_{k}[g] + \mvI_2[g] \mvI_{k}[f] \right),
	\end{split}
\end{equation*}
by the monotonicity of moments, which completes the proof.
\EndProof
\end{proof}

When applied to the solution $f(t,v,I)$ of the Boltzmann equation \eqref{BE frozen}, Proposition \ref{Prop weak} implies
\begin{equation}\label{sol}
\frac{\md}{\md t}\mv_{k}[f] =	\int_{\ivI} 	\Qf(f,f)(v,I)  \, \la v \ra^k \, \md v\, \md I    \leq 	 - A_{k} (\mv_{k}[f])^{1+\frac{\g}{k-2}}  + B_k,
\end{equation}
where the constants are abbreviated as $A_k = A_{k}[f,f]$ and $B_k=B_k[f,f]$.  Then, it is classical \cite{Alonso-Gamba-BAMS,Wenn} to prove generation and propagation of $v-$moments.
\begin{theorem}[Polynomial $v-$moment generation and propagation estimate] 
	Let $f$ be a solution of the Boltzmann equation \eqref{BE frozen}, having $\mvI_2$-moment finite. Define, respectively to constants in \eqref{sol},
	\begin{equation}\label{Ek}
		E_k   =   \left( \frac{B_{k}}{A_k} \right)^{\frac{k-2}{k-2+\g}}.
	\end{equation} Then the following estimates hold, for any $k>2$ and $t>0$,
\begin{align}
	& \text{1. \ (generation)} \quad 	\mv_{k}[f](t) \leq {E}_{k}   +  \left( \frac{k-2}{\g A_k } \right)^\frac{k-2}{\g} t^{-\frac{k-2}{\g}}, \qquad   \label{poly gen}\\
	&\label{poly prop} \text{2. \ (propagation)} \ \ 	\text{If $		\mv_{k}[f](0) < \infty$,   then} \ \	\mv_{k}[f](t) \leq \max \left\{ 	E_k,	\mv_{k}[f_0] \right\}. \qquad  
\end{align}
\end{theorem}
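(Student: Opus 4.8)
The plan is to reduce \eqref{poly gen}--\eqref{poly prop} to a scalar ordinary differential inequality and to conclude by an elementary comparison (barrier) argument. Writing $y(t) = \mv_k[f](t)$ and introducing the exponent $p = 1 + \frac{\g}{k-2} = \frac{k-2+\g}{k-2} > 1$, inequality \eqref{sol} takes the form $y'(t) \le -A_k\, y(t)^p + B_k$. Before exploiting it, I would first check that the coefficients are legitimately time-independent: by the frozen collision invariants \eqref{coll invariants} the mass $\mv_0[f]$ and the kinetic energy entering $\mv_2[f]$ are conserved, while the $I$-moments $\mI_\g[f]$ are conserved by the $I$-moment propagation Proposition; moreover $\mv_\g[f] \le \mv_2[f]$ for $\g \le 2$ since $\la v \ra \ge 1$. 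Consequently $A_k$ is constant in time and $B_k$ is bounded above by a time-independent constant, which is exactly what is needed to treat \eqref{sol} as a genuine autonomous inequality.

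Next I would locate the equilibrium. Since $\tfrac1p = \frac{k-2}{k-2+\g}$, the constant $E_k$ from \eqref{Ek} satisfies $E_k = (B_k/A_k)^{1/p}$, hence $A_k E_k^{\,p} = B_k$, so $E_k$ is precisely the stationary value of $y' = -A_k y^p + B_k$. For the propagation estimate \eqref{poly prop} the natural barrier is the constant $M = \max\{E_k, \mv_k[f_0]\}$: it is a supersolution because $-A_k M^p + B_k \le 0 = M'$ (equivalently $M \ge E_k$), and it dominates the datum; the comparison principle then gives $y(t) \le M$ for all $t$.

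For the generation estimate \eqref{poly gen} the barrier is $\phi(t) = E_k + z(t)$, where $z(t) = \big(\tfrac{k-2}{\g A_k}\big)^{(k-2)/\g}\, t^{-(k-2)/\g}$ is the exact solution of the pure-decay equation $z' = -A_k z^p$ that blows up as $t \to 0^+$ (one checks directly that $z' = -A_k z^p$ for this explicit $z$). The crucial step is to verify that $\phi$ is a supersolution of \eqref{sol}: since $\phi' = z' = -A_k z^p$ and $A_k E_k^p = B_k$, the inequality $\phi' \ge -A_k \phi^p + B_k$ is equivalent to $(E_k + z)^p \ge E_k^p + z^p$, which is just the superadditivity of $s \mapsto s^p$ for $p \ge 1$ and $E_k, z \ge 0$. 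Because $\phi(t) \to +\infty$ as $t \to 0^+$, the barrier dominates $y$ near the origin \emph{regardless} of whether $\mv_k[f_0]$ is finite, and comparison yields $y(t) \le E_k + z(t)$, which is \eqref{poly gen}.

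I expect the main obstacle to be the rigorous justification of the comparison when $\mv_k[f_0]$ may be infinite and $t \mapsto y(t)$ is only known to be absolutely continuous (so \eqref{sol} holds a.e.). I would handle this by running the comparison on subintervals $[\delta, T]$ and letting $\delta \to 0^+$: for each $\delta > 0$ the value $y(\delta)$ is finite and is dominated by $z(\delta)$, and a first-crossing-time argument (using the continuity of $F(s) = -A_k s^p + B_k$) then propagates the ordering up to $T$. The same device covers the propagation case; everything else reduces to the elementary ODE comparison together with the conservation facts recorded in the first paragraph.
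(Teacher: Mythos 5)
Your proposal is correct and coincides with what the paper does: the paper gives no proof of this theorem, deferring to the classical ODE comparison argument of \cite{Alonso-Gamba-BAMS,Wenn}, and your reduction of \eqref{sol} to $y'\le -A_k y^p+B_k$ with $p=1+\tfrac{\g}{k-2}$, the identification of $E_k$ as the equilibrium, the supersolution $E_k+z(t)$ with $z'=-A_kz^p$ via superadditivity of $s\mapsto s^p$, and the constant barrier $\max\{E_k,\mv_k[f_0]\}$ is exactly that argument (your preliminary check that $A_k,B_k$ are time-independent via conservation of $\mvI_0$, $\mv_2$ and $\mI_\g$ is also the right and necessary observation). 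The only soft spot is your assertion that $y(\delta)$ is finite for $\delta>0$ when $\mv_k[f_0]=\infty$ --- this is part of what generation means and in the classical treatment is secured by running the same ODI for truncated moments and passing to the limit --- but this is precisely the standard device the cited references supply.
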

Thus,  only the partial $v-$moments are generated, while both  $v-$moments and  $I-$moments are propagated.

\section{Moment estimates for a convex combination of  pure polyatomic and frozen collisions}\label{Sec: Convex}
In this section, the aim is to combine the results of  previous Sections \ref{Sec: Frozen} and \ref{Sec: ME frozen} for frozen collisions with already established  theory in \cite{Gamba-Colic-poly, Alonso-Gamba-Colic, MC-Alonso-Pesaro} for a polyatomic gas with pure polyatomic or non-frozen  collisions \eqref{coll CL poly} which interchange the internal energy.  Let us go ahead and write directly the pure polyatomic collision operator accounting for collisions \eqref{coll CL poly},
\begin{multline}\label{coll operator general}
	Q(f,g)(v,I) = \int_{\ivI} \int_{\ipar \times [0,1]^2} \left\{ f(v',I')g(v'_*,I'_*) \left(\frac{I I_*}{I' I'_*}\right)^{\alpha}   - f(v,I)g(v_*, I_*) \right\} \\ \times   {B}(v,v_*,I,I_*,\param)  \, r^{\alpha}(1-r)^{\alpha} \, (1-R)^{2\alpha+1}  \sqrt{R}  \, \md r \, \md R \, \dstar,
\end{multline}
where $\alpha>-1$,  and collisions \eqref{coll CL poly} are  parameterized  with   $(\sigma,r,R)\in\mathbb{S}^\times[0,1]^2$,
\begin{equation}\label{coll rules}
	\begin{alignedat}{2}
		v'  &= \frac{v+v_*}{2} + \sqrt{\frac{ R  \, E}{m}} \sigma,  \qquad 	&v'_{*} &=  \frac{v+v_*}{2}  -   \sqrt{\frac{  R \,  E}{m}} \sigma,\\
		I' &=r (1-R)E,	\quad &I'_* &= (1-r)(1-R) E, 
	\end{alignedat}
\end{equation}
where $E$ is given in \eqref{assump-tilde-B}. The collision kernel $B$ is an a.e. non-negative function  assumed to take a form like   in Section \ref{Sec: assumpt coll kernel}, i.e. to be factorized 
\begin{equation}\label{assumpt B factor full}
	B(v,v_*,I,I_*,\sigma,r,R)   =  b(\hat{u} \cdot\sigma) \ \tilde{B}(v,v_*,I,I_*,r,R),
\end{equation}
with a.e. non-negative $b(\hat{u} \cdot\sigma) \in L^1(\mathbb{S}^2;\md \sigma)$ and
\begin{equation}\label{assump-tilde-B-full} 
 \tilde{b}^{lb}(r, R) \,   \left( \frac{E}{m} \right)^{\g/2} \leq  \tilde{{B}}(v,v_*,I,I_*,\rR) \leq     \tilde{b}^{ub}(r, R) \,     \left( \frac{E}{m} \right)^{\g/2}, \quad \g \in (0,2],
\end{equation}
satisfying   $	\tilde{b}^{lb}(r, R), \, \tilde{b}^{ub}(r, R)  \in L^1([0,1]^2; \, r^{\alpha}(1-r)^{\alpha} \, (1-R)^{2\alpha+1}  \sqrt{R} \, \md r \, \md R)$. 

The averaging over the set of collision parameters   $(\param)$ can be exploited  similar to $\sigma$-averaging analogue to the  Povzner Lemma \ref{Lemma Povzner}, to obtain the following result. Due to the potential discrepancy  in  $ \tilde{b}^{lb}$ and $ \tilde{b}^{ub}$ as stated in \eqref{assump-tilde-B-full}, a desired decay is guaranteed to happen only after some $k_*>2$, as explained in the upcoming lemma.
\begin{lemma}[Povzner $(\param)$-averaging, Lemma 4.3 from \cite{Gamba-Colic-poly}] There exist   an explicit non-negative constant $ \mathcal{C}_k$ decreasing in $ k\geq0$   with $\lim_{k\rightarrow\infty} \mathcal{C}_k=0$, such that
\begin{multline*}
\int_{\ipar \times [0,1]^2}	 \left(  \la v', I' \ra^{k} + \la v'_*, I'_* \ra^{k}  \right)   b(\hat{u}\cdot \sigma) \,  \tilde{b}^{ub}(r, R)  \\ \times    \,r^{\alpha}(1-r)^{\alpha} \, (1-R)^{2\alpha+1}  \sqrt{R} \,  \md r \, \md R \, \md \sigma
\leq  \mathcal{C}_k  \left(  \la v, I \ra^{2} + \la v_*, I_* \ra^{2}  \right)^{k/2}.	
\end{multline*}	
In particular, there exists $k_*>2$, depending only on the angular part $b(\hat{u}\cdot\sigma)$ and the  function $\tilde{b}^{ub}(\rR)$,  such that
\begin{equation}\label{ks povzner}
	\mathcal{C}_k <  \int_{[0,1]^2 \times \mathbb{S}^2} b(\hat{u}\cdot \sigma) \,
		\tilde{b}^{lb}(r, R)
	r^{\alpha}(1-r)^{\alpha} \, (1-R)^{2\alpha+1}  \sqrt{R} \, \md r \, \md R \, \md \sigma, \quad \text{for} \;\; k \geq k_*.
\end{equation}	
\end{lemma}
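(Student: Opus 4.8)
The plan is to reduce the $(\param)$-averaging to a one-dimensional estimate for the way the conserved energy is partitioned between the two outgoing particles, adapting the monatomic Povzner scheme of \cite{Bob-Gamba-Panferov} to the parametrization \eqref{coll rules}. The starting point is the bracket form of total energy conservation: summing the definitions \eqref{brackets} and using \eqref{coll CL poly} (equivalently, the definition of $E$ entering \eqref{coll rules}) gives
\begin{equation*}
	\la v', I' \ra^2 + \la v'_*, I'_* \ra^2 = \la v, I \ra^2 + \la v_*, I_* \ra^2 =: \mathcal{E} \geq 2 .
\end{equation*}
Hence I would introduce the energy fraction carried by the first outgoing particle,
\begin{equation*}
	\la v', I' \ra^2 = \mathcal{E}\, \Theta, \qquad \la v'_*, I'_* \ra^2 = \mathcal{E}\,(1-\Theta), \qquad \Theta \in (0,1),
\end{equation*}
which is computed explicitly from \eqref{coll rules}: the kinetic part $\tfrac12|v'|^2$ depends on $\hat u\cdot\sigma$ and $R$, while the internal part $\tfrac1m I' = \tfrac1m r(1-R)E$ depends on $(r,R)$. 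The fraction $\Theta$ stays strictly inside $(0,1)$ because each bracket is bounded below by $1$.

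With this splitting the integrand factorizes cleanly,
\begin{equation*}
	\la v', I' \ra^{k} + \la v'_*, I'_* \ra^{k} = \mathcal{E}^{k/2}\left( \Theta^{k/2} + (1-\Theta)^{k/2} \right),
\end{equation*}
so the whole estimate rests on the elementary function $p \mapsto p^{k/2}+(1-p)^{k/2}$. For $k>2$ this function is strictly convex and symmetric about $p=\tfrac12$, hence it is bounded by $1$ on $[0,1]$ with equality only at the endpoints, it increases as $p$ moves away from $\tfrac12$, and for fixed $p\in(0,1)$ it decreases monotonically to $0$ as $k\to\infty$. These three properties deliver, respectively, the claimed inequality, the reduction to a state-independent constant, and the decay $\mathcal{C}_k\to 0$.

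The key reduction removing the state dependence is that the additive constants in \eqref{brackets} only balance the partition: writing $a=\tfrac12|v'|^2+\tfrac1m I'$ and $A=\tfrac12(|v|^2+|v_*|^2)+\tfrac1m(I+I_*)$, a direct computation gives $|\Theta-\tfrac12|\leq|\Theta_\infty-\tfrac12|$, where $\Theta_\infty=a/A$ is the purely physical (scale-invariant) energy fraction. By monotonicity away from $\tfrac12$ this yields the pointwise bound
\begin{equation*}
	\Theta^{k/2}+(1-\Theta)^{k/2} \leq \Theta_\infty^{k/2}+(1-\Theta_\infty)^{k/2} .
\end{equation*}
Since $\Theta_\infty$ depends only on the directions and ratios of $(v,v_*,I,I_*)$ and on $(\param)$, I would define $\mathcal{C}_k$ as the supremum over these homogeneous state parameters of
\begin{equation*}
	\int_{\ipar\times[0,1]^2} \left(\Theta_\infty^{k/2}+(1-\Theta_\infty)^{k/2}\right) b(\hat u\cdot\sigma)\,\tilde{b}^{ub}(r,R)\, r^{\alpha}(1-r)^{\alpha}(1-R)^{2\alpha+1}\sqrt{R}\,\md r\,\md R\,\md\sigma ,
\end{equation*}
which is finite and uniformly bounded since the integrand is dominated by $b\,\tilde{b}^{ub}\, r^{\alpha}(1-r)^{\alpha}(1-R)^{2\alpha+1}\sqrt{R}$, integrable by $b\in L^1(\mathbb{S}^2)$ and the assumptions on $\tilde{b}^{ub}$ following \eqref{assump-tilde-B-full}. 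Pointwise monotonicity in $k$ of the integrand then makes $\mathcal{C}_k$ nonincreasing, and dominated convergence gives $\mathcal{C}_k\to 0$, provided the set on which $\Theta_\infty\in\{0,1\}$ carries no collision weight, uniformly in the state.

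The main obstacle is exactly this last uniformity: one must show that $\Theta_\infty$ collapses to an endpoint (all energy to one particle) only on a $(\param)$-null set, and that this null set does not acquire positive weight in the limit as the homogeneous state parameters degenerate (for instance as $|u|\to\infty$ with $I,I_*$ bounded, or vice versa). This is precisely the content of Lemma 4.3 in \cite{Gamba-Colic-poly}, which controls $\Theta_\infty$ through the explicit form of \eqref{coll rules} and confirms both the monotone decrease and the vanishing of $\mathcal{C}_k$. Granting $\mathcal{C}_k\downarrow 0$, the existence of $k_*$ in \eqref{ks povzner} is immediate: the right-hand side there is a fixed strictly positive number, namely the full $(\param)$-mass of the lower-bound kernel, independent of $k$; since $\mathcal{C}_k$ is a nonincreasing null sequence, it eventually falls below this threshold, and $k_*$ is the first index at which this occurs, depending only on $b$ and $\tilde b^{ub}$ (through $\mathcal{C}_k$) and on $\tilde b^{lb}$ (through the threshold).
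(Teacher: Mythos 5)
The paper does not prove this lemma at all: it is imported verbatim as Lemma 4.3 of \cite{Gamba-Colic-poly}, so there is no internal argument to compare against. Judged on its own terms, your sketch follows the right strategy (the Bobylev--Gamba--Panferov energy-partition scheme: conservation of $\la v,I\ra^2+\la v_*,I_*\ra^2$, factorization into $\mathcal{E}^{k/2}(\Theta^{k/2}+(1-\Theta)^{k/2})$, and the clean reduction $|\Theta-\tfrac12|\le|\Theta_\infty-\tfrac12|$, which is correct since $\Theta-\tfrac12=(2a-A)/(2(2+A))$ and $\Theta_\infty-\tfrac12=(2a-A)/(2A)$). But the argument has a genuine gap, and you in effect concede it: the two properties that carry all the content of the lemma --- that $\mathcal{C}_k$ is finite as a \emph{supremum over states} and that it \emph{decays to zero} --- are deferred to ``Lemma 4.3 in \cite{Gamba-Colic-poly}'', which is precisely the statement you are supposed to be proving. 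As written, the proof is circular at its decisive step.

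Concretely, the missing work is the uniformity of the decay over the noncompact family of homogeneous state parameters. For each \emph{fixed} state, $\Theta_\infty$ touches $\{0,1\}$ only on a $(\sigma,r,R)$-null set (e.g.\ in the center-of-mass frame $\Theta_\infty=\tfrac{R}{2}+r(1-R)$, which equals $0$ or $1$ only at corners of $[0,1]^2$, carrying no weight under $r^{\alpha}(1-r)^{\alpha}(1-R)^{2\alpha+1}\sqrt{R}\,\md r\,\md R$), so dominated convergence gives decay state by state. But your $\mathcal{C}_k$ is a supremum over states, and a pointwise-decreasing family of functions on a noncompact parameter set need not have decreasing-to-zero supremum: one must rule out that, as the state degenerates (say $|v+v_*|^2/(E/m)\to 0$ or $\to\infty$, or $I+I_*$ dominating $|u|^2$), the $(\sigma,r,R)$-mass concentrates near the set $\Theta_\infty\in\{0,1\}$. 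That quantitative, state-uniform control of the partition function is the actual substance of the Povzner lemma in the polyatomic parametrization, and it is absent from your argument. The final step --- deducing the existence of $k_*$ from $\mathcal{C}_k\downarrow 0$ and the fixed positive threshold given by the $\tilde b^{lb}$-mass --- is correct, but it only works once the decay is genuinely established.
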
	

Next, we recall moment  estimates on the collision operator \eqref{coll operator general}.
 \begin{proposition}[Lemmas 5.6 and 5.8 from \cite{Alonso-Gamba-Colic}]\label{Q est poly} Let $\g \in (0,2]$. For a suitable $f$, there exist non-negative constants $\bar{A}_k>0$, $\bar{B}_k$ and $\bar{D}_k$ such that the following estimates hold on the pure polyatomic operator \eqref{coll operator general}
 	\begin{align}
 		&\text{(a)} \    \text{ for } \ 		k \geq k_*, \quad	
 		\int_{\ivI}   	Q(f,f)(v,I) \, \la v, I \ra^k \, \md v\, \md I   	\leq
 		-    \bar{A}_{k}  \mvI_{k+\g}[f]
 		+
 		\bar{B}_k, \quad \label{L1 k large}\\
 		&\text{(b)} \  \text { for } \  k>2, \quad
 		\int_{\ivI}   	Q(f,f)(v,I) \,  \la v, I \ra^k \, \md v\, \md I     	\leq
 		\bar{D}_k   \, \mvI_k[f], \quad \label{L1 k small}
 	\end{align}
 where $k_*$ is from \eqref{ks povzner}.
 \end{proposition}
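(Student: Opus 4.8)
The plan is to mirror the argument carried out for the frozen operator in Proposition~\ref{Prop weak}, now relying on the full Povzner $(\param)$-averaging recalled just above. I would start from the weak form of $Q(f,f)$ tested against $\la v, I \ra^k$. Following \cite{Gamba-Colic-poly,Alonso-Gamba-Colic}, micro-reversibility together with the change of variables dictated by the collision map \eqref{coll rules} absorbs the internal-energy weight $(I I_*/I' I'_*)^{\alpha}$ appearing in \eqref{coll operator general}, so that the weak form reduces to the gain-minus-loss expression $\la v', I' \ra^k + \la v'_*, I'_* \ra^k - \la v, I \ra^k - \la v_*, I_* \ra^k$ integrated against $f(v,I)\,f(v_*,I_*)\,B$. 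Invoking conservation of total energy from \eqref{coll CL poly} one has $\la v', I' \ra^k + \la v'_*, I'_* \ra^k \leq (\la v, I \ra^2 + \la v_*, I_* \ra^2)^{k/2}$, after which the Povzner lemma bounds the $(\param)$-average of the gain contribution by $\mathcal{C}_k (\la v, I \ra^2 + \la v_*, I_* \ra^2)^{k/2}$.

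Next I would apply the p-Binomial inequality exactly as in \eqref{pomocna 3}, now with the joint brackets $\la v, I \ra$, to separate the integrand into a diagonal part proportional to $-(\lambda_k - \mathcal{C}_k)(\la v, I \ra^k + \la v_*, I_* \ra^k)$, where $\lambda_k$ denotes the loss integral on the right-hand side of \eqref{ks povzner}, plus a positive mixed remainder. This is precisely where the threshold $k \geq k_*$ enters: by \eqref{ks povzner} the difference $\lambda_k - \mathcal{C}_k$ is strictly positive there, so it plays the coercive role that $\As = \|b\|_{L^1} - \mathcal{C}_k$ played in the frozen computation. The lower bound in \eqref{assump-tilde-B-full} upgrades this diagonal term by a factor $\la v, I \ra^{\g}$, i.e.\ an energy gain, producing a contribution of the form $-\bar{c}\,\mvI_0[f]\,\mvI_{k+\g}[f]$, while the upper bound in \eqref{assump-tilde-B-full} controls the positive mixed remainder by products of lower-order moments such as $\mvI_2[f]\,\mvI_{k-2+\g}[f]$.

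To reach the form stated in part (a), I would close the estimate by moment interpolation together with Young's inequality, as in \eqref{mi}: each positive mixed moment is interpolated between a fixed low moment ($\mvI_0$ or $\mvI_2$) and the top moment $\mvI_{k+\g}$, and the resulting $\mvI_{k+\g}$ contributions are absorbed into the coercive term at the cost of an additive constant. Unlike the frozen case, I would stop here rather than converting $\mvI_{k+\g}$ into a power of $\mvI_k$, which yields exactly $-\bar{A}_k\,\mvI_{k+\g}[f] + \bar{B}_k$ with $\bar{A}_k>0$ and $\bar{B}_k$ depending only on the conserved mass and energy $\mvI_0[f], \mvI_2[f]$. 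Part (b) is much softer and requires no threshold: for any $k>2$ I would combine energy conservation and the p-Binomial inequality with only the upper bound in \eqref{assump-tilde-B-full}, using $(E/m)^{\g/2} \leq \la v, I \ra^{\g} + \la v_*, I_* \ra^{\g}$ and the monotonicity $\mvI_{k-2+\g} \leq \mvI_k$ valid since $\g \leq 2$, then absorbing the fixed factor $\mvI_2[f]$ into the constant $\bar{D}_k$.

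The main obstacle is the coercivity in part (a), namely securing a strictly positive $\bar{A}_k$. In the frozen model the kernel is sandwiched between constant multiples of the same power, so one gains $\As = \|b\|_{L^1} - \mathcal{C}_k > 0$ for every $k>2$; here the internal-energy parametrization forces distinct weights $\tilde{b}^{lb} \neq \tilde{b}^{ub}$, the Povzner constant $\mathcal{C}_k$ being built from $\tilde{b}^{ub}$ whereas coercivity must beat the $\tilde{b}^{lb}$-integral $\lambda_k$. The two reconcile only asymptotically through $\mathcal{C}_k \to 0$, which is exactly why the decay of $\mathcal{C}_k$ must be pushed past the threshold $k_*$ of \eqref{ks povzner} before the diagonal term dominates and the estimate \eqref{L1 k large} holds.
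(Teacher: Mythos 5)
The paper does not prove this proposition itself—it is recalled verbatim from Lemmas 5.6 and 5.8 of \cite{Alonso-Gamba-Colic}—and your reconstruction follows exactly the argument of those lemmas, which is also the template the paper reuses for the frozen case in Proposition \ref{Prop weak}: Povzner $(\param)$-averaging with $\tilde{b}^{ub}$ against the loss integral built from $\tilde{b}^{lb}$ for the coercive term when $k\geq k_*$, p-Binomial plus interpolation and Young to absorb the mixed remainder for part (a), and energy conservation plus the kernel upper bound and monotonicity of moments for part (b). Your identification of why the threshold $k_*$ is unavoidable here (the mismatch between $\tilde{b}^{lb}$ and $\tilde{b}^{ub}$, reconciled only through $\mathcal{C}_k\to 0$) and of why part (b) needs no threshold are both correct.
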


In the sequel, we consider an $\omega$-convex combination \eqref{omega coll op BE} of the pure polyatomic operator  from \eqref{coll operator general} and its frozen-counterpart   \eqref{coll operator frozen}, with collision kernels satisfying \eqref{assump-tilde-B} and \eqref{assump-tilde-B-full} with possibly different potential rates emphasized by  respectively $\g$ and $\gf$ in the subscript of the collision operators.
The corresponding Cauchy problem for the Boltzmann equation   reads \eqref{omega coll op BE}
\begin{equation}\label{BE om}
	\partial_t f(t,v,I) = \Qom(f,f), \quad f(0,v,I)=f_0(v,I).
\end{equation}

We gather Propositions \ref{Q est poly} and \ref{Prop weak} to state the following Proposition.
\begin{proposition}[Polynomial moments of the $\omega-$convex collision operator]\label{Prop weak omega}
Let $\g\in(0,2]$ be the rate of the collision kernel of the pure polyatomic operator \eqref{coll operator general}--\eqref{assumpt B factor full}--\eqref{assump-tilde-B-full} and $\gf \in[0,2]$ the rate of the frozen counterpart   \eqref{coll operator frozen}--\eqref{assumpt B factor}--\eqref{assump-tilde-B}. For suitable $f$, the following   estimate holds on the collision operator $\Qom$ defined in \eqref{omega coll op BE}, for  the convex factor $\omega>0$,
	\begin{align}
	&\text{(a)} \   \text{ for } \ 		k \geq k_*, \ \	
	\int_{\ivI}  \Qom(f,f) (v,I)  \la v, I \ra^k \, \md v\, \md I   
%	\leq   	-   \frac{ \omega \,  \tilde{A}_{k}}2   \mvI_{k+\g}[f]  + {B}^\omega_k 
\leq - A_k^\omega \mvI_{k}[f]^{1+\frac{\g}{k-2}}   + {B}^\omega_k, \label{L1 est omega large k}\\
	&\text{(b)} \  \text { for } \  k>2, \ \ 
	\int_{\ivI}  \Qom(f,f) (v,I)  \la v, I \ra^k \, \md v\, \md I   
\leq   D^\omega_k   \, \mvI_k[f], \label{L1 est omega small k}
\end{align}
where the non-negative constants $\tilde{A}_k>0$, ${B}^\omega_k$ and  ${D}^\omega_k$  are   explicitly computed.
\end{proposition}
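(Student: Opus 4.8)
The plan is to exploit the linearity of the $\omega$-convex operator, $\Qom(f,f) = \omega\, Q(f,f) + (1-\omega)\,\Qf(f,f)$, and to bound the two contributions separately by the estimates already in hand: Proposition \ref{Q est poly} for the pure polyatomic part and Proposition \ref{Prop weak} for the frozen part, both tested against $\la v,I\ra^k$. A preliminary observation is that the frozen estimate \eqref{vI moment}, specialized to $g=f$ and divided by two, reads $\int_{\ivI} \Qf(f,f)\,\la v,I\ra^k\,\md v\,\md I \leq D_k\,\mvI_2[f]\,\mvI_k[f]$; note that it produces no decaying high-order moment, only a $\mvI_k$ term, because the frozen collisions leave the internal energy untouched and therefore cannot extract any gain in the full $\la v,I\ra$-moment. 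It is also essential that $\mvI_2[f]$ is a conserved quantity (mass plus total energy), hence bounded by its initial value for all time, so that every constant built from it is genuinely a constant.

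For part (b) the argument is immediate: combining \eqref{L1 k small} with the frozen bound above gives
\begin{equation*}
\int_{\ivI}\Qom(f,f)\,\la v,I\ra^k\,\md v\,\md I \leq \omega\,\bar{D}_k\,\mvI_k[f] + (1-\omega)\,D_k\,\mvI_2[f]\,\mvI_k[f],
\end{equation*}
which is of the claimed form with $D_k^\omega = \omega\,\bar{D}_k + (1-\omega)\,D_k\,\mvI_2[f]$.

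The substance lies in part (a). Applying \eqref{L1 k large} to the polyatomic part and the frozen bound to the other yields, for $k\geq k_*$,
\begin{equation*}
\int_{\ivI}\Qom(f,f)\,\la v,I\ra^k\,\md v\,\md I \leq -\,\omega\,\bar{A}_k\,\mvI_{k+\g}[f] + \omega\,\bar{B}_k + (1-\omega)\,D_k\,\mvI_2[f]\,\mvI_k[f].
\end{equation*}
The negative term carries the high moment $\mvI_{k+\g}$ with the polyatomic rate $\g$, while the frozen contribution adds only a positive $\mvI_k$ term. The idea is to absorb the latter into the former. I would use the moment interpolation $\mvI_k[f]\leq (\mvI_2[f])^{\g/(k-2+\g)}(\mvI_{k+\g}[f])^{(k-2)/(k-2+\g)}$ followed by Young's inequality with conjugate exponents $\tfrac{k-2+\g}{k-2}$ and $\tfrac{k-2+\g}{\g}$, so that for any $\delta>0$ the positive term is dominated by $\delta\,\mvI_{k+\g}[f]$ plus a constant depending on $\mvI_2[f]$; choosing $\delta=\tfrac12\,\omega\,\bar{A}_k$ leaves a strictly negative multiple of $\mvI_{k+\g}[f]$. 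Finally I would turn this high moment back into the advertised power of $\mvI_k[f]$ via the same interpolation rewritten as $\mvI_{k+\g}[f]\geq (\mvI_2[f])^{-\g/(k-2)}(\mvI_k[f])^{1+\g/(k-2)}$, arriving at
\begin{equation*}
\int_{\ivI}\Qom(f,f)\,\la v,I\ra^k\,\md v\,\md I \leq -A_k^\omega\,(\mvI_k[f])^{1+\frac{\g}{k-2}} + B_k^\omega,
\end{equation*}
with $A_k^\omega = \tfrac12\,\omega\,\bar{A}_k\,(\mvI_2[f])^{-\g/(k-2)}$ and $B_k^\omega$ collecting $\omega\,\bar{B}_k$ and the constant produced by the Young step.

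The main obstacle is conceptual rather than computational: the frozen operator offers no decay in the full moment, so the entire gain must originate from the pure polyatomic part, which forces $\omega>0$ and explains why the generation rate is $\g$ rather than $\gf$. The only point requiring care is that the absorption leaves a strictly negative coefficient, that is, $\tfrac12\,\omega\,\bar{A}_k>0$; since $\bar{A}_k>0$ for $k\geq k_*$ by Proposition \ref{Q est poly} and $\mvI_2[f]$ is uniformly controlled by conservation, the constants $A_k^\omega>0$ and $B_k^\omega$ are well defined and explicit.
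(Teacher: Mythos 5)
Your proposal is correct and follows essentially the same route as the paper: split $\Qom$ linearly, bound the polyatomic part by Proposition \ref{Q est poly} and the frozen part by \eqref{vI moment} with $g=f$, absorb the frozen contribution into $-\omega\bar{A}_k\mvI_{k+\g}$ via interpolation and Young's inequality with $\delta=\omega\bar{A}_k/2$, and convert back to $\mvI_k^{1+\g/(k-2)}$ by the lower interpolation bound, arriving at the same constants $A_k^\omega$, $B_k^\omega$ and $D_k^\omega$ as the paper.
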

\begin{proof}
Gathering estimates \eqref{L1 k large} and \eqref{vI moment}, integration of the collision operator $ \Qom$ against $\la v, I \ra^k$ implies, for $k>k_*$,
\begin{multline*}
	\int_{\ivI}  \Qom(f,f) (v,I) \, \la v, I \ra^k \, \md v\, \md I   
	\\	\leq   	-  \omega \,  \bar{A}_{k}     \mvI_{k+\g}[f]  + (1-\omega)    D_k \mvI_2[f] \mvI_{k}[f]     + \omega \, \bar{B}_k.
\end{multline*}
Next, by moment interpolation formula \eqref{mi} and  Young's inequality,
\begin{equation*}
	\begin{split}
		(1-\omega)    D_k \mvI_2 \mvI_{k}  & \leq  (1-\omega)    D_k  \mvI_2^{\frac{\g}{k-2+\g}+1} \mvI_{k+\g}^{\frac{k-2}{k-2+\g}}  \\
		& \leq \tilde{K} + \delta \,  \mvI_{k+\g}, \quad \text{with} \ \  \tilde{K}= {( (1-\omega)    D_k )^{\frac{k-2+\g}{\g}} \mvI_2^{\frac{k-2+\g}{\g}+1} }{  \delta^{-\frac{k-2}{\g}}}.
	\end{split}
\end{equation*}
Thus, for $ \omega> 0$, choosing $\delta = \omega \bar{A}_k/2$, the statement \eqref{L1 est omega large k} follows, by denoting 
$$A_k^\omega = \frac{ \omega \,  \bar{A}_{k}}2  \mvI_2[f]^{-\frac{\g}{k-2}}, \qquad  {B}^\omega_k = \omega \, \bar{B}_k +  \tilde{K}.$$
The second inequality  \eqref{L1 est omega small k}, for $k>2$, follows by gathering \eqref{vI moment} and \eqref{L1 k small}, and by denoting $D_k^\omega =  	\omega \, \bar{D}_k  + (1-\omega)    D_k \mvI_2[f]$, which completes the proof. 
\EndProof
\end{proof}
When applied to the Boltzmann equation \eqref{BE om}, since 
\begin{equation*}
	\begin{split}
		\frac{\md}{\md t}\mvI_{k}[f] =	\int_{\ivI}  \Qom(f,f) (v,I) \, \la v, I \ra^k \, \md v\, \md I,
	\end{split}
\end{equation*}
Proposition \ref{Prop weak omega} implies the following a priori estimates on the solution. 
\begin{theorem}[Polynomial moment generation and propagation estimate] 
	Let $f$ be a solution of the Boltzmann equation \eqref{BE om}, having $\mvI_2$-moment finite. For any $k>2$, $\g\in(0,2]$ and $\gf\in[0,2]$,  define  the following constants, respectively to those  in Proposition \ref{Prop weak omega},
\begin{equation}\label{Ek omega}
	\begin{split}
		E_k^{\omega}   &=   \left( \frac{B^\omega_{k}}{A_k^\omega} \right)^{\frac{k-2}{k-2+\g}}, \qquad \mathcal{E}^\omega_k =  \mvI_2[f]^\frac{k_* - k+1}{k_*-1}   \left( {E}^\omega_{k_*+1}\right)^{\frac{k-2}{k_*-1}}  \\ 
		\tilde{\mathcal{E}}^\omega_k &=  \mathcal{E}^\omega_k   + 	\mvI_2[f]^\frac{k_* - k+1}{k_*-1} \left( \frac{(k_*-1)D_k^\omega}{\g A_k^\omega } \right)^\frac{k-2}{\g}. 
	\end{split}
\end{equation}
Then, the following estimates hold, for $t>0$,\\
	
	\noindent 1.  (generation) 
	\begin{align}
		&\text{(a)} \   \text{ for } \ 		k \geq k_*, \quad	
		\label{poly gen om}
		\mvI_{k}[f](t) \leq 	E_k^{\omega}   +  \left( \frac{k-2}{\g A_k^\omega } \right)^\frac{k-2}{\g} t^{-\frac{k-2}{\g}},  \\
		&\text{(b)} \  \text { for } \  2< k < k_*, \quad
	\mvI_{k}[f](t)	\leq \mathcal{E}^\omega_k   + 	\mvI_{2}[f]^\frac{k_* - k+1}{k_*-1}  \left( \frac{k_*-1}{\g A_k^\omega } \right)^\frac{k-2}{\g} t^{-\frac{k-2}{\g}}. \label{poly gen small k}
	\end{align}	

\noindent	2. (propagation) Moreover, if $	\mvI_{k}[f](0) =	\mvI_{k}[f_0]  < \infty$, then
	\begin{align}
		&\text{(a)} \   \text{ for } \ 		k \geq k_*, \quad		\mvI_{k}[f](t) \leq \max \left\{ 	E^\omega_k,	\mvI_{k}[f_0] \right\}, \label{poly prop large k} \\
		&\text{(b)} \  \text {  for } \  2< k < k_*, \quad 
		\mvI_{k}[f](t) \leq \max \left\{  \tilde{\mathcal{E}}^\omega_k, e \,		\mvI_{k}[f_0]  \right\}, \qquad \qquad \qquad  \ \ \ \label{poly prop small k}
	\end{align}
	where $e$ is the Euler's number.
\end{theorem}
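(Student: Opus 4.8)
The common engine for all four bounds is Proposition \ref{Prop weak omega} read along a solution of \eqref{BE om}. Writing $y(t):=\mvI_{k}[f](t)$, estimate \eqref{L1 est omega large k} gives the Bernoulli-type differential inequality
\[
y'(t)\le -A_k^\omega\, y(t)^{1+\frac{\g}{k-2}}+B^\omega_k,\qquad k\ge k_*,
\]
while \eqref{L1 est omega small k} gives the linear inequality $y'(t)\le D^\omega_k\,y(t)$ valid for every $k>2$. I would first record that $\mvI_2[f](t)=\mvI_2[f_0]$ for all $t$: mass, momentum, and total (kinetic plus internal) energy are collision invariants of both the pure polyatomic operator \eqref{coll operator general} and the frozen operator \eqref{coll operator frozen} (indeed \eqref{coll invariants} conserves $|v|^2$ and every $\chi(I)$ for the frozen part), hence of their $\omega$-combination. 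This furnishes a fixed finite constant $\mvI_2[f_0]$ to feed the interpolations below.

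For the threshold range $k\ge k_*$, both 1(a) and 2(a) follow from the classical ODE comparison for the Bernoulli inequality, exactly as in the frozen theorem \eqref{poly gen}--\eqref{poly prop} and \cite{Alonso-Gamba-BAMS,Wenn}. For generation \eqref{poly gen om} I would verify that $u(t)=E_k^\omega+\big(\tfrac{k-2}{\g A_k^\omega}\big)^{(k-2)/\g}\,t^{-(k-2)/\g}$, which blows up as $t\to0^+$, is a supersolution; since $u(0^+)=+\infty$ dominates any datum, comparison yields the bound with no reference to $y(0)$. For propagation \eqref{poly prop large k} I would use that $E_k^\omega=(B_k^\omega/A_k^\omega)^{(k-2)/(k-2+\g)}$ is precisely the zero of the right-hand side, so $y'<0$ whenever $y>E_k^\omega$; hence $y$ can never rise above $\max\{E_k^\omega,y(0)\}$.

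The genuinely new work is the sub-threshold range $2<k<k_*$, where the absorbing super-linear term is unavailable and only the linear inequality holds. For generation 1(b) I would bootstrap from the high moments: apply 1(a) at the admissible order $k_*+1$, bound $\mvI_2[f]$ by the conserved $\mvI_2[f_0]$, and interpolate by log-convexity of moments,
\[
\mvI_{k}[f]\le \big(\mvI_2[f]\big)^{\frac{k_*+1-k}{k_*-1}}\,\big(\mvI_{k_*+1}[f]\big)^{\frac{k-2}{k_*-1}}.
\]
Inserting the generation bound for $\mvI_{k_*+1}[f]$ and separating the steady and decaying contributions with the subadditivity $(a+b)^p\le a^p+b^p$ for $p=\tfrac{k-2}{k_*-1}\in(0,1)$ reproduces exactly $\mathcal{E}^\omega_k$ and the decay term of \eqref{poly gen small k}; the time exponent collapses to $(k-2)/\g$ because $\tfrac{k_*-1}{\g}\cdot\tfrac{k-2}{k_*-1}=\tfrac{k-2}{\g}$.

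For propagation 2(b) I would run a two-timescale argument keyed to $T:=1/D^\omega_k$. On $[0,T]$, the linear inequality and Gr\"onwall give $\mvI_{k}[f](t)\le \mvI_{k}[f_0]\,e^{D^\omega_k t}\le e\,\mvI_{k}[f_0]$, which is where the Euler constant enters through $e^{D^\omega_k T}=e$. On $[T,\infty)$, I would invoke the already-established generation bound \eqref{poly gen small k}, whose decaying term is monotone in $t$ and hence dominated by its value at $t=T$, namely $\mvI_2[f_0]^{(k_*-k+1)/(k_*-1)}\big(\tfrac{(k_*-1)D^\omega_k}{\g A_k^\omega}\big)^{(k-2)/\g}$, so that $\mvI_{k}[f](t)\le \tilde{\mathcal{E}}^\omega_k$ there. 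Taking the maximum over the two regimes yields \eqref{poly prop small k}. I expect the \emph{main obstacle} to be not the analysis but this sub-threshold bookkeeping: choosing $T=1/D^\omega_k$ so that the short-time exponential produces precisely the factor $e$ while the long-time generation plateau produces precisely the extra $D^\omega_k$-factor inside $\tilde{\mathcal{E}}^\omega_k$, and identifying the interpolation order $k_*+1$ that makes the constants in 1(b) and 2(b) match.
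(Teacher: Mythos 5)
Your proof is correct and follows essentially the same route as the paper, which itself gives no details and only cites Theorem 6.2 of \cite{Alonso-Gamba-Colic}: the Bernoulli-type ODE comparison with the blowing-up supersolution for $k\ge k_*$, interpolation of $\mvI_k$ between the conserved $\mvI_2$ and the generated $\mvI_{k_*+1}$ for $2<k<k_*$, and the Gr\"onwall-versus-generation splitting at $T=1/D_k^\omega$ that produces the factor $e$. The only cosmetic point is that your interpolation naturally yields $A_{k_*+1}^\omega$ in the decay term of 1(b) where the statement writes $A_k^\omega$; that discrepancy sits in the paper's bookkeeping of constants, not in your argument.
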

The proof follows the same steps as for the pure polyatomic case and is derived in detail in \cite{Alonso-Gamba-Colic}, Theorem 6.2.

\begin{acknowledgement}
R. Alonso thanks TAMUQ internal funding research grant 470242-25650. M. \v{C}oli\'c thanks grants from the Ministry of Science, Technological Development and Innovation of the Republic of Serbia (Grants Nos. 451-03-137/2025-03/200125 and 451-03-136/2025-03/200125), and gratefully acknowledges the support from the Field of Excellence COLIBRI and the hospitality of the Department of Mathematics and Scientific Computing at the University of Graz, where part of this work was conducted.
\end{acknowledgement}

%%%%%%%%%%%%%%%%%%%%%%%% referenc.tex %%%%%%%%%%%%%%%%%%%%%%%%%%%%%%
% sample references
% %
% Use this file as a template for your own input.
%
%%%%%%%%%%%%%%%%%%%%%%%% Springer-Verlag %%%%%%%%%%%%%%%%%%%%%%%%%%
%
% BibTeX users please use
% \bibliographystyle{}
% \bibliography{}
%

\end{document}